\newtheorem{theorem}{Theorem}
\newtheorem{definition}[theorem]{Definition}
\newtheorem{lemma}[theorem]{Lemma}
\newtheorem{proposition}[theorem]{Proposition}
\newenvironment{proof}[1][Proof]{\noindent\textbf{#1.} }{\ \rule{0.5em}{0.5em}}
\begin{document}

\title{Equicontinuous factors of one dimensional cellular automata}
\author{Rezki CHEMLAL. \and Laboratoire de Math\'{e}matiques Appliqu\'{e}es,
\and Universit\'{e} Abderahmane Mira Bejaia.06000 Bejaia Algeria.}
\maketitle

\begin{abstract}
We are interested in topological and ergodic properties of one dimensional
cellular automata. We show that an ergodic cellular automaton cannot have
irrational eigenvalues. We show that any cellular automaton with an
equicontinuous factor has also as a factor an equicontinuous cellular
automaton. We show also that a cellular automaton with almost equicontinuous
points according to Gilman's classification has an equicontinuous measurable
factor which is a cellular automaton.\newline
2000 \textit{Mathematics Subject Classification.}: 37B15, 54H20, 37A30.%
\newline
\textit{Key words and phrases. }Cellular Automata, Dynamical systems,
equicontinuous factor.
\end{abstract}

\section*{Introduction}

Cellular automata dates back to John Von Neumann in the late 1940s. The
first models were biologically motivated. During the past years, they have
drawn lot of interest as models of complex systems \cite{Chop03},\cite{ABM03}%
,\cite{FarDen08},\cite{Nowak}. In the meantime a dual work on theoretical
properties of cellular automata was also attracting many researchers. These
properties can be studied using many tools and perspectives (language
recognition, dynamical systems, decidability \ldots etc.).

A cellular automaton is made of an infinite lattice of finite identical
automata. The lattice is usually $\mathbb{Z}^{n}$ with $n$ called the
dimension of the cellular automaton. The set of possible states of an
automaton is called the alphabet and each element of the alphabet is
referred to as a letter. A configuration is a snapshot of the state of all
automata in the lattice. The global state of the CA, specified by the values
of all the variables at a given time, evolves according to a global
transition map $F$ based on a local rule $f$ which acts on the value of each
single cell in synchronous discrete time steps.

Dynamical behavior of cellular automata is studied mainly in the context of
discrete dynamical systems by equipping the space of configurations with the
product topology which make it homeomorphic to the Cantor space.

We want to characterize equicontinuous factors of cellular automata both
measurable and topological. We establish a topological result : A cellular
automaton with an equicontinuous factor has also an equicontinuous cellular
automaton as a factor.

Two ergodic results are shown. We give a constructive proof of the existence
of a measurable equicontinuous factor for a cellular automata with
measurable equicontinuous points according to Gilman's classification.

Finally we show that an ergodic cellular automaton cannot be conjugated with
an irrational rotation.

The paper is divided into two parts, part one deals with basic notation and
definitions. The second part is for new results.

\section{Basic notions}

In this section, we recall standard definitions about CA as dynamical
systems. We begin by introducing some general notation we will use
throughout the rest of the paper.

\subsection{Cellular automata as dynamical systems}

Let $A$ be a finite set; a word is a sequence of elements of $A$. The length
of a finite word $u=u_{0}...u_{n-1}\in A^{n}$ is $\left\vert u\right\vert
=n. $We denote by $A^{\mathbb{Z}}$ the set of bi-infinite sequences over $A$%
. A point $x$ $\in $ $A^{\mathbb{Z}}$ is called a configuration. For two
integers $i,j$ with $i<j$ we denote by $x\left( i,j\right) $ the word $%
x_{i}...x_{j}.$

For any word $u$ we define the cylinder $\left[ u\right] _{l}=\left\{ x\in
A^{\mathbb{Z}}:x\left( l,l+\left\vert u\right\vert \right) =u\right\} $
where the word $u$ is at the position $l.$ The cylinder $\left[ u\right]
_{0} $ is simply noted $\left[ u\right] $. The cylinders are clopen (closed
open) sets.

Endowed with the distance $d\left( x,y\right) =2^{-n}$ with $n=min\left\{
i\geq 0:x_{i}\neq y_{i}\,\right. $\newline
$\left. \mathrm{or}\,x_{-i}\neq y_{-i}\right\} $, the set $A^{\mathbb{Z}}$
is a topological compact separated space.

The shift map $\sigma :$ $A^{\mathbb{Z}}\rightarrow $ $A^{\mathbb{Z}}$ is
defined as $\sigma \left( x\right) _{i}=x_{i+1},$ for any $x\in A^{\mathbb{Z}%
}$ and $i\in \mathbb{Z}$. The shift map is a continuous and bijective
function on $A^{\mathbb{Z}}.$ The dynamical system $\left( A^{\mathbb{Z}%
},\sigma \right) $ is commonly called \emph{full shift.}

A cellular automaton is a continuous map $F:A^{\mathbb{Z}}\rightarrow A^{%
\mathbb{Z}}$ commuting with the shift. By the Curtis-Hedlund Lyndon theorem 
\cite{Hed69} for every cellular automaton $F$ there exist an integer $r$ and
a block map $f$ from $A^{2r+1}$ to $A$ such that $F\left( x\right)
_{i}=f\left( x_{i-r},...,x_{i},...x_{i+r}\right) .$ The integer $r$ is
called the radius of the cellular automaton.

A point $x$ is said periodic if there exists $p>0$ with $F^{p}\left(
x\right) =x.$ The least $p$ with this property is called the period of $x.$
A\ point $x$ is eventually periodic if $F^{m}\left( x\right) $ is periodic
for some $m\geq 0.$

By commutation with the shift, every shift-periodic point is $F-$eventually
periodic and the set of eventually periodic points is dense.

Endowed with the sigma-algebra on $A^{\mathbb{Z}}$ generated by all cylinder
sets and $\nu $ the uniform measure which gives the same probability to
every letter of the alphabet, $\left( A^{\mathbb{Z}},\mathbb{B},F,\nu
\right) $ is a measurable space. The uniform measure is invariant if and
only if the cellular automaton is surjective \cite{Hed69}.In the following $%
\left( A^{\mathbb{Z}},\mathbb{B},F,\nu \right) $ will denote a surjective
cellular automaton equipped with the uniform measure.

\subsection{Equicontinuous and almost equicontinuous points of cellular
automata}

\subsubsection{K\r{u}rka's classification}

K\r{u}rka \cite{Kur03} introduced a topological classification based on the
equicontinuity, sensitiveness and expansiveness properties. The existence of
an equicontinuous point is equivalent to the existence of a blocking word
i.e. a configuration that stop the propagation of the perturbations on the
one dimensional lattice.

\begin{definition}
Let $F$ be a cellular automaton.\newline
1. A point $x$ is an equicontinuous point if : 
\begin{equation*}
\forall \epsilon >0,\exists \delta >0,\forall y:d\left( x,y\right) <\delta
,\forall n\geq 0,d\left( F^{n}\left( y\right) ,F^{n}\left( x\right) \right)
<\epsilon .
\end{equation*}%
\newline
2. We say that $F$ is equicontinuous if every point $x\in A^{\mathbb{Z}}$ is
an equicontinuous point.\newline
3.We say that $F$ is sensitive if for all $x\in A^{\mathbb{Z}}$ we have : 
\begin{equation*}
\exists \epsilon >0,\forall \delta >0,\exists y:d\left( x,y\right) <\delta
,\exists n\geq 0\text{,}d\left( F^{n}\left( y\right) ,F^{n}\left( x\right)
\right) \geq \epsilon .
\end{equation*}
\end{definition}

\begin{definition}
Let $F$ be a cellular automaton. A word $w$ with $\left\vert w\right\vert
\geq s$ is an $s$-blocking word for $F$ if there exists $p\in \left[
0,\left\vert w\right\vert -s\right] $ such that for any $x,y\in \left[ w%
\right] $ we have $F^{n}\left( x\right) \left( p,p+s\right) =F^{n}\left(
y\right) \left( p,p+s\right) $ for all $n\geq 0.$
\end{definition}

\begin{proposition}
Let $F$ be a cellular automaton\ with radius $r>0.$ The following conditions
are equivalent.\newline
1. $F$ is not sensitive. \newline
2. $F$ has an $r-$blocking word.\newline
3. $F$ has some equicontinuous point.
\end{proposition}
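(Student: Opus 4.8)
The plan is to prove the three conditions equivalent by first disposing of $(1) \Leftrightarrow (3)$ on purely logical grounds and then establishing $(3) \Rightarrow (2)$ and $(2) \Rightarrow (3)$, which carry the actual content. For $(1) \Leftrightarrow (3)$ I would simply negate the sensitivity condition: with the definition given above, $F$ fails to be sensitive exactly when there is some $x$ for which $\forall \epsilon > 0,\ \exists \delta > 0$ such that every $y$ with $d(x,y) < \delta$ satisfies $d(F^n(y), F^n(x)) < \epsilon$ for all $n \geq 0$. This is verbatim the statement that $x$ is an equicontinuous point, so "not sensitive" and "existence of an equicontinuous point" are literally the same assertion.

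For $(3) \Rightarrow (2)$, I would read off a blocking word from the initial window of agreement. Let $x$ be an equicontinuous point and apply the definition with $\epsilon = 2^{-r}$, obtaining $\delta = 2^{-m}$, which I may take with $m \geq r$; note that $d < 2^{-r}$ forces coordinatewise agreement on $[-r,r]$. Put $w = x(-m,m)$. Any two configurations $y,z$ in the cylinder $[w]_{-m}$ agree with $x$ on $[-m,m]$, hence lie within $\delta$ of $x$; equicontinuity then gives $F^n(y)(-r,r) = F^n(x)(-r,r) = F^n(z)(-r,r)$ for every $n \geq 0$. Translating to the coordinates of the definition (the frozen window sits at relative position $p = m-r$ and has width $\geq r$, which can be shrunk to a window of the form $(p',p'+r)$), this says precisely that $w$ is an $r$-blocking word.

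For $(2) \Rightarrow (3)$, the idea is to trap perturbations between two copies of the blocking word. Let $w$ be an $r$-blocking word and let $x$ be the $\sigma$-periodic configuration obtained by repeating $w$; since $F$ commutes with $\sigma$, the blocking property of $[w]$ transfers to every occurrence of $w$ in $x$, so each occurrence carries a ``wall'', a window of width $\geq r$ whose contents are frozen for all time among configurations sharing that occurrence. Given $\epsilon = 2^{-k}$ I would select one occurrence of $w$ lying entirely to the right of coordinate $k$ and one entirely to the left of $-k$, and take $m$ large enough that $[-m,m]$ contains both walls together with the whole region between them; set $\delta = 2^{-m}$. If $d(x,y) < \delta$ then $y$ agrees with $x$ on $[-m,m]$, so $y$ contains the same two walls. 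The crux is then a decoupling argument: because the local rule has radius $r$ and each wall has width $\geq r$, the image of any cell strictly between the walls depends only on cells in the region bounded by the walls, never on cells beyond them. Since the walls remain frozen identically in $x$ and $y$ and the two configurations agree on the enclosed region at time $0$, an induction on $n$ shows that $F^n(x)$ and $F^n(y)$ agree on the whole enclosed region, in particular on $[-k,k]$, for every $n$. Hence $x$ is an equicontinuous point.

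The routine parts are the logical equivalence $(1)\Leftrightarrow(3)$ and the extraction of the blocking word in $(3)\Rightarrow(2)$. The step I expect to require the most care is the decoupling induction in $(2)\Rightarrow(3)$: one must verify that a frozen window of width at least $r$ genuinely blocks the radius-$r$ information flow in both directions (this is where the hypothesis $r>0$ and the exact width of the window enter), and must keep careful track of the fact that the two configurations agree on the entire region bounded by the walls at every time step, not merely on its interior.
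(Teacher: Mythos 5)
The paper states this proposition without proof --- it is recalled as part of K\r{u}rka's classification, with the argument to be found in \cite{Kur03} --- so there is no in-paper proof to compare against. Judged on its own, your proof is correct, and it is the canonical one: $(3)\Rightarrow(2)$ by instantiating $\epsilon=2^{-r}$ and reading the blocking word off the cylinder of stability, and $(2)\Rightarrow(3)$ by taking the $\sigma$-periodic configuration $w^{\infty}$ and running the two-wall decoupling induction. You also handle correctly the step that is most often fumbled: the induction hypothesis must be agreement of $F^{n}(x)$ and $F^{n}(y)$ on the \emph{closed} region bounded by the walls, with the walls themselves restored at each step by the blocking property (not by locality of the rule), while radius-$r$ locality propagates agreement on the interior; the union of the two walls and the interior then recovers the whole region, closing the induction.

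One caveat. Your reduction of $(1)\Leftrightarrow(3)$ to pure logic is legitimate only for the definition of sensitivity as printed in this paper, where the sensitivity constant is quantified per point ($\forall x\,\exists\epsilon$). K\r{u}rka's standard definition demands a uniform constant ($\exists\epsilon\,\forall x$), and with it the negation of sensitivity yields, for each $\epsilon$, only some $\epsilon$-stable point, possibly varying with $\epsilon$; the equivalence of $(1)$ and $(3)$ is then a genuine theorem whose proof must pass through $(2)$, i.e., the natural structure is the cycle $(1)\Rightarrow(2)\Rightarrow(3)\Rightarrow(1)$. Your argument survives this change with a trivial rerouting: your $(3)\Rightarrow(2)$ extraction never uses full equicontinuity at the point, only stability at the single scale $\epsilon=2^{-r}$, so it applies verbatim to the point furnished by non-sensitivity, and your $(2)\Rightarrow(3)$ then produces an honest equicontinuous point. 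But as written, the claim that $(1)$ and $(3)$ are ``literally the same assertion'' should be flagged as definition-dependent.
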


\subsubsection{Gilman's classification}

Based on the Wolfram's work \cite{Wol84}, Gilman \cite{Gil87}\cite{Gil88}
introduced a classification using Bernoulli measures which are not
necessarily invariant. Cellular automata can then be divided into three
classes : CA with equicontinuous points, CA with almost equicontinuous
points but without equicontinuous points and almost expansive CA.

In \cite{Tis08} Tisseur extends the Gilman's classification to any shift
ergodic measure and gives an example of a cellular automaton with an
invariant measure which have almost equicontinuous points but without
equicontinuous points.

\begin{definition}
Let $F$ be a cellular automaton and $\left[ i_{1},i_{2}\right] $ a finite
interval of $\mathbb{Z}$. For $x\in A^{\mathbb{Z}}$. We define $B_{\left[
i_{1},i_{2}\right] }\left( x\right) $ by : 
\begin{equation*}
B_{\left[ i_{1},i_{2}\right] }\left( x\right) =\left\{ y\in A^{\mathbb{Z}%
},\forall j:F^{j}\left( x\right) \left( i_{1},i_{2}\right) =F^{j}\left(
y\right) \left( i_{1},i_{2}\right) \right\} .
\end{equation*}
\end{definition}

For any interval $\left[ i_{1},i_{2}\right] $ the relation $\mathfrak{R}$
defined by $x\mathfrak{R}y$ if and only if $\forall j:F^{j}\left( x\right)
\left( i_{1},i_{2}\right) =F^{j}\left( y\right) \left( i_{1},i_{2}\right) $
is an equivalence relation and the sets $B_{\left[ i_{1},i_{2}\right]
}\left( x\right) $ are the equivalence classes.

\begin{definition}
Let $\left( F,\mu \right) $ a cellular automaton equipped with a shift
ergodic measure $\mu ,$ a point $x$ is $\mu -$equicontinuous if for any $m>0$
we have :%
\begin{equation*}
\underset{n\rightarrow \infty }{\lim }\frac{\mu \left( \left[ x\left(
-n,n\right) \right] \cap B_{\left[ -m,m\right] }\left( x\right) \right) }{%
\mu \left( \left[ x\left( -n,n\right) \right] \right) }=1.
\end{equation*}

We say that $F$ is $\mu -$almost expansive if there exist $m>0$ such that
for all $x\in A^{\mathbb{Z}}:\mu \left( B_{\left[ -m,m\right] }\left(
x\right) \right) =0.$
\end{definition}

\begin{definition}
Let $\left( F,\mu \right) $ denote a cellular automaton equipped with a
shift ergodic measure $\mu .$ Define classes of cellular automata as follows
:\newline
1- $\left( F,\mu \right) \in \mathcal{A}$ if $F$ is equicontinuous at some $%
x\in A^{\mathbb{Z}}.$\newline
2- $\left( F,\mu \right) \in \mathcal{B}$ if $F$ is $\mu -$almost
equicontinuous at some $x\in A^{\mathbb{Z}}$ but $F\notin \mathcal{A}$.%
\newline
3- $\left( F,\mu \right) \in \mathcal{C}$ if $F$ is $\mu -$almost expansive.
\end{definition}

\subsection{Measurable Dynamics}

A cellular automaton $\left( A^{\mathbb{Z}},\mathbb{B},F,\mu \right) $ is
ergodic if every invariant subset of $A^{\mathbb{Z}}$ is either of measure 0
or of measure 1. Equivalently, if for any measurable $U,V\subset A^{\mathbb{Z%
}},$ there exists some $n\in \mathbb{N}$ such that $\mu \left( U\cap
F^{-n}\left( V\right) \right) >0.$ It is said weakly mixing if $F\times F$
is ergodic.

A cellular automaton $\left( A^{\mathbb{Z}},\mathbb{B},F,\mu \right) $ is
mixing if , for any measurable $U,V\subset A^{\mathbb{Z}}$ we have :%
\begin{equation*}
\underset{n\rightarrow \infty }{\lim }\mu \left( U\cap F^{-n}\left( V\right)
\right) =\mu \left( U\right) \text{ }.\text{ }\mu \left( V\right)
\end{equation*}

A cellular automaton $(B^{\mathbb{Z}},G)$ is a topological factor of $(A^{%
\mathbb{Z}},F)$, if there exists a surjective continuous map $\pi $ from $A^{%
\mathbb{Z}}$ to $B^{\mathbb{Z}}$ such that $\pi \circ f=g\circ \pi $. We can
define in a similar way measurable factor if $\pi $ is a measurable map.

We denote by $L_{\mu }^{2}$ the set of measurable functions $g:A^{\mathbb{Z}%
}\rightarrow \mathbb{C}$ for which $\left\Vert f\right\Vert _{2}=\left(
\int_{A^{\mathbb{Z}}}\left\vert g\right\vert ^{2}d\mu \right) ^{\frac{1}{2}}$
is finite.

Let $\left( A^{\mathbb{Z}},\mathbb{B},F,\mu \right) $ be a cellular
automaton where $\mu $ is an invariant measure. We say that the function $%
g\in L_{\mu }^{2}$ is a measurable eigenfunction associated to the
measurable eigenvalue $\lambda \in \mathbb{C}$ if $g\circ F=\lambda $ $.$ $%
g\,ae$.

By definition any eigenvalue must be an element of the unit circle. The set $%
\mathbb{S}_{F}$ of all eigenvalues of $F$ which form a multiplicative sub
group of the complex roots of the unity is called its spectrum.

As any eigenvalue can be written in the form $\exp \left( 2i\pi \alpha
\right) $; we will say that an eigenvalue is rational if $\alpha \in \mathbb{%
Q}$ and irrational otherwise.

We say that $F$ has a discrete spectrum if $L_{\mu }^{2}$ is spanned by the
set of eigenfunctions of $F.$

A cellular automaton is ergodic iff any eigenfunction is of constant module
and weakly mixing iff it admits 1 as unique eigenvalue and that all
eigenfunctions are constant. For more details about these classical results
you can see for example \cite{Wal82}.

There is a relation between the spectrum of the shift and that of the
cellular automaton.

\begin{proposition}
\textbf{Pivato} \cite{Piv12} Let $\left( A^{\mathbb{Z}},F\right) $ be a
cellular automaton and $\mu $ a $\sigma -$ergodic measure we have :\newline
1- $\mathbb{S}_{\sigma }\subset \mathbb{S}_{F}.$\newline
2- If $\left( A^{\mathbb{Z}},\sigma ,\mu \right) $ has discrete spectrum,
then so does $\left( A^{\mathbb{Z}},F,\mu \right) .$\newline
3- If $\mu $ is $F-$ergodic and $\left( A^{\mathbb{Z}},\sigma ,\mu \right) $
is weakly mixing then so is $\left( A^{\mathbb{Z}},F,\mu \right) .$
\end{proposition}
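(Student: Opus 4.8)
The whole argument rests on a single structural fact: because $F$ is a cellular automaton it commutes with the shift, $F\circ\sigma=\sigma\circ F$, and hence the associated Koopman operators on $L_\mu^2$, namely $U_\sigma g=g\circ\sigma$ and $U_F g=g\circ F$, also commute. Both are well defined and isometric once $\mu$ is invariant under $\sigma$ and under $F$ (the standing surjectivity hypothesis guarantees the latter for the uniform measure). The second ingredient is that $\sigma$-ergodicity of $\mu$ makes the point spectrum of $\sigma$ \emph{simple}: each eigenvalue $\lambda\in\mathbb{S}_\sigma$ has a one-dimensional eigenspace, spanned by a single unimodular eigenfunction $g_\lambda$, the constant modulus being exactly the characterization of ergodicity recalled above.

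The heart of the proof is a transport lemma. Fix $\lambda\in\mathbb{S}_\sigma$ and its eigenfunction $g_\lambda$, so $U_\sigma g_\lambda=\lambda g_\lambda$. Applying the commutation relation,
\[
U_\sigma\bigl(U_F g_\lambda\bigr)=U_F\bigl(U_\sigma g_\lambda\bigr)=U_F(\lambda g_\lambda)=\lambda\,\bigl(U_F g_\lambda\bigr),
\]
so $U_F g_\lambda$ again lies in the $\lambda$-eigenspace of $\sigma$. By simplicity this eigenspace is one-dimensional, whence $U_F g_\lambda=c(\lambda)\,g_\lambda$ for a scalar $c(\lambda)$; thus $g_\lambda$ is itself an eigenfunction of $F$. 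Since $g_\lambda$ is unimodular and $\mu$ is $F$-invariant, $\left\vert c(\lambda)\right\vert=1$, and $\lambda\mapsto c(\lambda)$ is multiplicative because the $g_\lambda$ multiply. This records every shift eigenvalue inside the eigenfunction system of $F$ and yields part~1; the delicate point, which I expect to be the main obstacle, is to verify that the induced $F$-eigenvalue $c(\lambda)$ coincides with $\lambda$ (equivalently, that $F$ acts compatibly on the Kronecker factor carried by $g_\lambda$), so that the \emph{values}, and not merely the eigenfunctions, are inherited.

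Part~2 is then immediate from the transport lemma. If $\left(A^{\mathbb{Z}},\sigma,\mu\right)$ has discrete spectrum, its eigenfunctions $\{g_\lambda\}$ form a total family in $L_\mu^2$; but each $g_\lambda$ is an eigenfunction of $F$ by the lemma, so the $F$-eigenfunctions already span $L_\mu^2$, which is precisely discrete spectrum for $\left(A^{\mathbb{Z}},F,\mu\right)$.

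Part~3 uses the symmetric version of the transport argument together with the extra hypothesis that $\mu$ is $F$-ergodic, which makes the point spectrum of $F$ simple as well. Any $F$-eigenfunction $h$, say $U_F h=\beta h$, satisfies $U_F\bigl(U_\sigma h\bigr)=\beta\,U_\sigma h$, forcing $U_\sigma h=b\,h$; thus $h$ is a $\sigma$-eigenfunction. Weak mixing of $\sigma$ means $\mathbb{S}_\sigma=\{1\}$ with only constant eigenfunctions, so $h$ is constant and $\beta=1$. Hence $F$ admits no nonconstant eigenfunction and no eigenvalue other than $1$, i.e. $\left(A^{\mathbb{Z}},F,\mu\right)$ is weakly mixing. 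Throughout, the only analytic care needed is the preservation of $L_\mu^2$-membership and of the eigenvalue equations under composition, guaranteed by invariance of $\mu$ under both maps.
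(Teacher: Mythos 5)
First, a structural point: the paper does not prove this proposition at all — it is quoted as background from Pivato \cite{Piv12} — so there is no in-paper proof to compare against; your argument has to stand on its own. On its own terms, your transport lemma is exactly the right mechanism, and it settles parts 2 and 3 completely: commutation of the Koopman operators plus simplicity of the point spectrum of an ergodic transformation gives that every $\sigma$-eigenfunction is an $F$-eigenfunction, which is all part 2 needs; and the symmetric argument (simplicity on the $F$-side from $F$-ergodicity, then weak mixing of $\sigma$ forcing every $F$-eigenfunction to be constant) is the standard and correct proof of part 3.

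The "delicate point" you flag in part 1, however, is a genuine gap — and you should know it cannot be closed, because the identity $c(\lambda)=\lambda$ is false in general, and with it the literal inclusion of eigenvalue groups $\mathbb{S}_\sigma\subset\mathbb{S}_F$ as the paper defines $\mathbb{S}_F$. Concretely, take $F=\sigma^{2}$ (a perfectly good cellular automaton) and $\mu$ the uniform measure on the two-point $\sigma$-orbit of $\left(01\right)^{\infty}$: then $\mu$ is $\sigma$-ergodic and $F$-invariant, $\mathbb{S}_\sigma=\left\{1,-1\right\}$, but $F$ acts as the identity on the support of $\mu$, so $\mathbb{S}_F=\left\{1\right\}$; here $c(\lambda)=\lambda^{2}$. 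Adding $F$-ergodicity does not rescue the statement either: for the unique invariant measure of a Sturmian subshift with irrational rotation number $\alpha$ and $F=\sigma^{2}$, one gets $\mathbb{S}_\sigma=\left\{e^{2i\pi n\alpha}:n\in\mathbb{Z}\right\}$ while $\mathbb{S}_F=\left\{e^{4i\pi n\alpha}:n\in\mathbb{Z}\right\}$, which does not contain $e^{2i\pi\alpha}$. What is true — and what you actually proved — is the inclusion of eigen\emph{function} systems: every $\sigma$-eigenfunction is an $F$-eigenfunction, and $\lambda\mapsto c(\lambda)$ is a homomorphism into $\mathbb{S}_F$ (injective when $\mu$ is $F$-ergodic). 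That eigenfunction form is the only thing parts 2 and 3 require, and it is presumably what Pivato's original statement asserts; the paper's transcription conflates eigenvalues with eigenfunctions. So your parts 2 and 3 stand, and for part 1 you proved as much as is provable: the obstacle you identified is a defect of the statement as written, not of your argument — but as a proof of the statement as literally given, part 1 remains unproven (indeed unprovable).
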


\section{Statement of results}

This section is for new results. The first proposition states that if a
cellular automaton has a topological equicontinuous factor then it has also
a factor which is an equicontinuous cellular automaton.

\begin{proposition}
Let $F$ be a cellular automaton, if $F$ has an equicontinuous factor then $F$
has also as a factor an equicontinuous cellular automaton.
\end{proposition}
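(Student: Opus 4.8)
The plan is to extract from the given equicontinuous factor a finite clopen coding that the dynamics merely permutes, and then to spread this coding over the lattice by a sliding block map, so that the resulting factor acts letter by letter through a permutation -- i.e. it is a periodic, hence equicontinuous, cellular automaton.

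So let $\phi :A^{\mathbb{Z}}\rightarrow Y$ be the given continuous surjection onto an equicontinuous system $\left( Y,T\right) $ with $\phi \circ F=T\circ \phi $, and let $d_{Y}$ be a metric on $Y$. The first step is to observe that the family $\left\{ \phi \circ F^{n}\right\} _{n\geq 0}=\left\{ T^{n}\circ \phi \right\} _{n\geq 0}$ is equicontinuous: $\phi $ is uniformly continuous by compactness of $A^{\mathbb{Z}}$, and $\left\{ T^{n}\right\} $ is equicontinuous by hypothesis. Hence for every $\epsilon >0$ there is a window radius $k$ such that $x\left( -k,k\right) =x^{\prime }\left( -k,k\right) $ forces $d_{Y}\left( F^{n}\left( x\right) ,F^{n}\left( x^{\prime }\right) \right) $-images under $\phi $ to stay within $\epsilon $ for \emph{all} $n$; that is, the whole forward $\phi $-itinerary of $x$ is controlled, uniformly in time, by a finite central window.

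The second step is to produce a nontrivial finite clopen partition of $Y$ that $T$ permutes. Since $\left( Y,T\right) $ is equicontinuous, the closure $E=\overline{\left\{ T^{n}:n\geq 0\right\} }$ in the uniform topology is a compact abelian group of homeomorphisms in which $T$ is a topological generator. A nontrivial finite quotient of $E$ gives a nontrivial finite cyclic factor $\left( \left\{ 1,\dots ,s\right\} ,\tau \right) $ of $\left( Y,T\right) $, where $\tau $ is a permutation with $\tau ^{p}=\mathrm{id}$, together with a partition of $Y$ into clopen atoms $Q_{1},\dots ,Q_{s}$ with $T\,Q_{a}=Q_{\tau \left( a\right) }$. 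Granting this, I define $\pi :A^{\mathbb{Z}}\rightarrow \left\{ 1,\dots ,s\right\} ^{\mathbb{Z}}$ by $\pi \left( x\right) _{i}=a$ iff $\phi \left( \sigma ^{i}x\right) \in Q_{a}$. Each $\phi ^{-1}\left( Q_{a}\right) $ is clopen, hence depends only on a finite window, so $\pi $ is a continuous sliding block code. Using $F\sigma ^{i}=\sigma ^{i}F$ together with $T\,Q_{a}=Q_{\tau \left( a\right) }$ one computes $\pi \left( F\left( x\right) \right) _{i}=\tau \left( \pi \left( x\right) _{i}\right) $, that is $\pi \circ F=G\circ \pi $ where $G$ is the radius-zero cellular automaton $G\left( w\right) _{i}=\tau \left( w_{i}\right) $. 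Since $\tau ^{p}=\mathrm{id}$ we get $G^{p}=\mathrm{id}$, so $G$ is periodic and therefore equicontinuous; restricting the codomain to the subshift $X=\pi \left( A^{\mathbb{Z}}\right) $, on which $G$ still acts as a letterwise permutation, yields the desired equicontinuous cellular automaton factor $\left( X,G\right) $ of $F$.

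The genuinely delicate step is the second one: guaranteeing a nontrivial \emph{finite clopen} partition of the factor that $T$ permutes. This is automatic once $Y$ is totally disconnected, for then $E$ is profinite and its finite cyclic quotients are abundant (the odometer, where one recovers the cyclic factors $\mathbb{Z}/p^{k}$, is the model case). The obstacle is to exclude the possibility that the only equicontinuous factors available are connected ones carrying no clopen structure at all, such as an irrational rotation of the circle; equivalently, to rule out a nontrivial continuous irrational eigenfunction of $F$, or to replace $Y$ by its maximal totally disconnected equicontinuous factor and argue the latter is still nontrivial. I expect this to be the crux of the argument, and I would attack it through the zero-dimensionality of $A^{\mathbb{Z}}$: a nontrivial equicontinuous factor separates two fibres by a clopen subset of $A^{\mathbb{Z}}$, and exploiting the interplay of the commuting maps $F$ and $\sigma $ on such a clopen set should force the required finite invariant clopen partition, after which the letterwise construction above finishes the proof.
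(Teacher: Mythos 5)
The conditional half of your argument is fine and is essentially the paper's own device: given a finite clopen partition $Q_{1},\dots ,Q_{s}$ of $Y$ permuted by $T$, each $\phi ^{-1}\left( Q_{a}\right) $ is clopen, hence a finite union of cylinders, so your map $\pi \left( x\right) _{i}=a\iff \phi \left( \sigma ^{i}x\right) \in Q_{a}$ is a sliding block code and the identity $\pi \circ F=G\circ \pi $ with $G$ a letterwise permutation (hence periodic, hence equicontinuous) CA is correct. This matches the paper's construction of the coding $\pi ^{\prime }$ onto the mod-$p$ counter automaton $C$.

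But your proof is not complete, and the hole is exactly where you say it is: step 2, the existence of a nontrivial finite clopen partition permuted by $T$, is left as a plan of attack rather than an argument, and the route you sketch cannot succeed as stated. The intermediate claim you would need --- that the enveloping group $E=\overline{\left\{ T^{n}\right\} }$ admits a nontrivial finite continuous quotient --- is false whenever $E$ is connected: a connected compact group has no proper open subgroup, so for an irrational rotation there is no nontrivial clopen invariant partition at all. Ruling out such factors requires an input specific to cellular automata, and that input is stated in Section 1 of the paper: every shift-periodic point is $F$-eventually periodic, so the $F$-eventually periodic points are dense in $A^{\mathbb{Z}}$, and their $\phi $-images form a dense set of eventually periodic points of $\left( Y,T\right) $. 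This immediately excludes irrational rotations (which have no eventually periodic points), and indeed also excludes odometers --- so even the totally disconnected case is not ``automatic'' by abstract profinite structure theory, contrary to what your sketch suggests; an odometer is an equicontinuous Cantor system which is a factor of no cellular automaton. The paper's proof begins precisely from this eventual periodicity: the factor dynamics has a finite preperiod $p_{0}$ and period $p$, producing finitely many clopen preimage classes from which the counter CA $C$ on the alphabet $\left\{ 0,\dots ,p\right\} $ is built. Since your proposal never uses the eventually periodic points of $F$, the crux of step 2 remains out of reach, and the proof as written has a genuine gap.
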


\begin{proof}
Let $G$ be an equicontinuous factor of $F$. There exists then a surjective
continuous map $\pi $ such that :%
\begin{equation*}
\pi \circ F=G\circ \pi
\end{equation*}

As $G$ is an equicontinuous map on a zero-dimensional space then it is
almost periodic. Let us denote the preperiod by $p_{0}$ and the period by $%
p. $ The set of possible values taken by $G$ is denoted by :%
\begin{equation*}
\mathbb{P}=\left\{ y_{k}:p_{0}\leq k\leq p_{0}+p-1\right\} .
\end{equation*}%
\newline
By uniform continuity there is an integer $N$ such that the function $G$
gives for each cylinder of length $N$ a value from $\mathbb{P}$. \newline
The set $G^{-1}\left( \left\{ y_{k}\right\} \right) $ can then be written in
the form $G^{-1}\left( \left\{ y_{k}\right\} \right) =\cup _{j=1}^{n_{k}}%
\left[ w_{j}^{k}\right] $ where $w_{j}^{k}$ is a word of length $N.$

Let us denote by $Y_{k}=\left\{ w_{j}^{k},1\leq j\leq n_{k}\right\}
,p_{0}\leq k\leq p_{0}+p-1.$ Define the alphabet : $B=\left\{
0,...,p\right\} ^{\mathbb{Z}}$ and the function $\pi ^{\prime }$ from $A^{%
\mathbb{Z}}$ to $B^{\mathbb{Z}}$ by :%
\begin{equation*}
\forall x\in A^{\mathbb{Z}}:\left\{ 
\begin{array}{l}
\pi ^{\prime }\left( x_{i}\right) =i-p_{0}:x_{i,i+N}\in Y_{k},p_{0}\leq
k\leq p_{0}+p-1 \\ 
p:otherwise.%
\end{array}%
\right.
\end{equation*}%
Notice that the function $\pi ^{\prime }$ is a surjection from $A^{\mathbb{Z}%
}$ to $B^{\mathbb{Z}}$.

Let us define the periodic CA $\left( \left\{ 0,...,p\right\} ^{\mathbb{Z}%
},C\right) $ by 
\begin{equation*}
\forall x\in \left( \mathbb{Z}/\left( p+1\right) \mathbb{Z}\right) ^{\mathbb{%
Z}}:C\left( x\right) =\left\{ 
\begin{array}{l}
\left( x_{i}+1\right) \,mod\,p\text{ if }x\neq p \\ 
p\text{ if }x=p%
\end{array}%
\right.
\end{equation*}%
Computing $\pi ^{\prime }\circ G$ and $C\circ \pi ^{\prime }$ we obtain :%
\begin{equation*}
\left( \pi ^{\prime }\circ G\right) \left( x\right) =\left\{ 
\begin{array}{l}
\left( x_{i}+1\right) \,mod\,p:x_{i,i+N}\in Y_{k},p_{0}\leq k\leq p_{0}+p-1
\\ 
p:otherwise.%
\end{array}%
\right.
\end{equation*}%
\begin{equation*}
\left( C\circ \pi ^{\prime }\right) \left( x\right) =\left\{ 
\begin{array}{l}
\left( x_{i}+1\right) \,mod\,p:x_{i,i+N}\in Y_{k},p_{0}\leq k\leq p_{0}+p-1
\\ 
p:otherwise.%
\end{array}%
\right.
\end{equation*}%
This shows that $C$ is an equicontinuous factor of $G.$ Hence $C$ is an
equicontinuous factor of $F.$
\end{proof}

The next result is about the existence of measurable equicontinuous factor
which is a cellular automaton for cellular automata with almost
equicontinuous points.

We start by the proof of a lemma. The main idea of the proof is not new \cite%
{Gil88}, but necessary to understand the proof of the following proposition.

\begin{lemma}
\label{Lemma} Let $\left( F,\nu \right) $ be a surjective cellular automaton
of radius $r$ with $\nu -$equicontinuous points. Suppose that $x$ is a $\nu $%
-equicontinuous point. Then, the set $\sigma ^{-p}\left( B_{\left[ -r,r%
\right] }\left( x\right) \right) \cap \left( B_{\left[ -r,r\right] }\left(
x\right) \right) $ is of positive measure for every integer $p.$ Moreover
for each point $y\in \sigma ^{-p}\left( B_{\left[ -r,r\right] }\left(
x\right) \right) \cap \left( B_{\left[ -r,r\right] }\left( x\right) \right) $
the sequence $F^{k}\left( y\right) \left( -r,r\right) $ is eventually
periodic.
\end{lemma}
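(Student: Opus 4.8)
The plan is to set $W=B_{[-r,r]}(x)$ and first record that $\nu(W)>0$. This is immediate from the hypothesis that $x$ is $\nu$-equicontinuous: taking $m=r$ in the defining limit, the ratio $\nu([x(-n,n)]\cap B_{[-r,r]}(x))/\nu([x(-n,n)])$ tends to $1$, and since every cylinder $[x(-n,n)]$ has positive uniform measure, the numerator is positive for $n$ large; as $[x(-n,n)]\cap B_{[-r,r]}(x)\subseteq W$, this forces $\nu(W)>0$.

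For the positive-measure statement I would exploit that $\nu$ is the uniform Bernoulli measure, hence $\sigma$-mixing. Mixing applied to the fixed positive-measure set $W$ gives $\nu(W\cap\sigma^{-p}W)\to\nu(W)^{2}>0$ as $|p|\to\infty$, which yields the claim for all sufficiently large $|p|$. To reach every $p$ one has to glue two blocked regions: a configuration $y$ lies in $W\cap\sigma^{-p}W$ exactly when both $y$ and $\sigma^{p}y$ belong to $W$, i.e. $y$ reproduces the central column $(F^{k}(x)(-r,r))_{k}$ of $x$ simultaneously on the windows $[-r,r]$ and $[p-r,p+r]$. For $|p|$ larger than the window these windows are disjoint, and the asymptotic-density blocking supplied by $\nu$-equicontinuity lets one prescribe $y$ near $0$ and near $p$ on coordinate blocks that are independent under the product measure, so the intersection keeps positive measure.

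Here I expect the genuine difficulty. For small $p$ the two windows overlap, and already at time $0$ they impose $y(-r,r)=x(-r,r)=y(p-r,p+r)$; unless the word $x(-r,r)$ is compatible with its own shift by $p$, the intersection is empty. Since $\nu$-equicontinuity only provides blocking of density $1$ inside shrinking cylinders rather than a true blocking word (this is the distinction between Gilman's class $\mathcal{B}$ and topological equicontinuity), one cannot simply plant a hard blocking word at two prescribed sites. Making the intersection positive for \emph{every} $p$, and not only for large spacings, is the main obstacle; the crude bound $\nu(W\cap\sigma^{-p}W)\ge\nu(C_{n})^{2}-2\,\nu(C_{n}\setminus W)$ coming from inclusion--exclusion on the cylinders $C_{n}=[x(-n,n)]$ is too weak, because $\nu(C_{n}\setminus W)$ is only $o(\nu(C_{n}))$ rather than $o(\nu(C_{n})^{2})$.

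For the eventual periodicity I would argue separately and cleanly. Every $y$ in $\sigma^{-p}W\cap W$ lies in $W$, so $F^{k}(y)(-r,r)=c_{k}$ where $c_{k}:=F^{k}(x)(-r,r)$; hence it suffices to show that the single sequence $(c_{k})_{k\ge 0}$ is eventually periodic. Because $F$ is surjective, $\nu$ is $F$-invariant, so $(A^{\mathbb{Z}},\mathbb{B},F,\nu)$ is measure preserving with $\nu(W)>0$. By the Poincar\'{e} recurrence theorem almost every point of $W$ returns to $W$, so there are $y\in W$ and $n\ge 1$ with $F^{n}(y)\in W$. Reading off the window at all later times, $y\in W$ gives $F^{n+k}(y)(-r,r)=c_{n+k}$ while $F^{n}(y)\in W$ gives $F^{n+k}(y)(-r,r)=c_{k}$; comparing, $c_{n+k}=c_{k}$ for all $k\ge 0$, so $(c_{k})$ is periodic, in particular eventually periodic, which is the assertion.
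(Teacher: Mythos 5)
Your proof of the eventual periodicity is correct and takes a genuinely different route from the paper. The paper argues by splicing: for $y$ in the intersection it writes $y=y^{-}wuwy^{+}$, inserts extra copies of $uw$ to build configurations $y^{(i)}$ that remain in suitable blocking sets, lets them converge to the shift-periodic configuration $(uw)^{\infty}$, and uses closedness of $B_{[-r,r]}(x)$ to conclude that this shift-periodic point lies in $B_{[-r,r]}(x)$, whence its trace (hence every member's trace) is eventually periodic. Your argument --- surjectivity gives $F$-invariance of $\nu$ (Hedlund, cited in the paper), $\nu(W)>0$, Poincar\'{e} recurrence gives $y\in W$ and $n\geq 1$ with $F^{n}(y)\in W$, and reading the window in two ways gives $c_{n+k}=c_{k}$ for all $k\geq 0$ --- is shorter, avoids the induction the paper leaves as ``it is possible to show'', and even yields that $(c_{k})$ is periodic from time $0$, not merely eventually periodic. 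It also only uses $\nu(W)>0$, not the intersection statement, so the ``moreover'' clause is established for whatever points the intersection happens to contain.

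On the positive-measure claim, your suspicion is justified, and your diagnosis is in fact sharper than the paper's own proof. The paper deduces $\nu\left(\sigma^{-p}W\cap W\right)>0$ for a fixed $p$ ``by the ergodicity of the shift,'' but ergodicity only produces \emph{some} $n$ with $\nu\left(\sigma^{-n}W\cap W\right)>0$, and mixing of the uniform Bernoulli measure (your argument) produces it only for all sufficiently large $|p|$. For small $p$ the claim is false in general, exactly for the reason you give: $y\in\sigma^{-p}W\cap W$ forces $y(-r,r)=x(-r,r)=y(p-r,p+r)$, and when the two windows overlap this forces $x(-r,r)$ to agree with its own shift by $p$. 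Concretely, for the identity automaton presented with radius $1$, every point is $\nu$-equicontinuous, $W=[x(-1,1)]_{-1}$, and if $x(-1,1)$ is not a constant word then $\sigma^{-1}W\cap W=\emptyset$. So the gap you flagged is a defect of the lemma as stated (and of the paper's proof of that step), not of your approach; the lemma, and its use in the following proposition, survive if ``every integer $p$'' is read as ``every sufficiently large $p$,'' and then your mixing argument together with your recurrence argument covers the statement completely.
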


\begin{proof}
Let $r$ be the radius of the cellular automaton and let $x$ be a $\nu $%
-equicontinuous point. Then we have $\nu \left( B_{\left[ -r,r\right]
}\left( x\right) \right) >0.$\newline

By the ergodicity of the shift, for a fixed value $p>0$, we have:%
\begin{equation*}
\nu \left( \sigma ^{-p}B_{\left[ -r,r\right] }\left( x\right) \cap \left( B_{%
\left[ -r,r\right] }\left( x\right) \right) \right) >0.
\end{equation*}%
For any $y\in \sigma ^{-p}B_{\left[ -r,r\right] }\left( x\right) \cap \left(
B_{\left[ -r,r\right] }\left( x\right) \right) $ we have:%
\begin{equation*}
\forall k\geq 0:F^{k}\left( y\right) \left( -r,r\right) =F^{k}\left(
y\right) \left( -r+p,r+p\right) .
\end{equation*}

Let us denote $y\left( -r,r\right) =w$ and by $u$ the word between two
occurrences of $w$. Let us also denote by $y^{-}$ the part at left of the
first word $w$ and $y^{+}$ that's at right of the second $w.$

By incorporating the word $uw$ in $y$ between the words $w$ and $u$ we
obtain a new element $y^{\left( 1\right) }$ containing two occurrences of
the word $uw.$By repeating this process we obtain a sequence $y^{\left(
i\right) }$ containing at each iteration one more occurrence of the word $uw.
$ 

By recurrence under the $F$-action it is possible to show that $y^{\left(
i\right) }$ still belongs to $B_{\left[ -ij_{1}-\left( i+1\right)
r,ij_{1}+\left( i+1\right) r\right] }\left( x\right) $ for any $i>0.$

The sequence of configurations $y^{\left( i\right) }=y^{-}w\underset{%
\leftarrow \text{ }\left( i-1\right) \text{ }\rightarrow }{uwuw...uw}y^{+}$
containing at each step a one more occurrence of the word $uw$ converge to
the shift periodic configuration $\left( uw\right) ^{\infty }$ which share
with $y$ the same coordinates over $\left( -r,r\right) $.

As the set $B_{\left[ -r,r\right] }\left( x\right) $ is closed the periodic
point $\left( uw\right) ^{\infty }$ is in $B_{\left[ -r,r\right] }\left(
x\right) ,$ the sequence of words $\left( F^{k}\left( x\right) \left(
-r,r\right) \right) _{k\geq 0}$ is then eventually periodic.
\end{proof}

\begin{proposition}
Let $\left( F,\nu \right) $ be a surjective cellular automaton with $\nu -$%
equicontinuous points then $F$ has a measurable equicontinuous factor which
is a cellular automaton.
\end{proposition}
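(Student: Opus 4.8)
The plan is to mimic the construction of the periodic cellular automaton $C$ from the previous proposition, but to replace the globally defined equicontinuous factor map by a measurable \emph{phase function} supported on the positive-measure sets produced by Lemma \ref{Lemma}. Fix a $\nu$-equicontinuous point $x$. By Lemma \ref{Lemma} the sequence of central words $\left( F^{k}(x)(-r,r)\right) _{k\geq 0}$ is eventually periodic; denote its preperiod by $p_{0}$ and its period by $p$, and index the $p$ distinct words it cycles through so that $v_{0}=F^{p_{0}}(x)(-r,r),\dots ,v_{p-1}$. For $0\leq j\leq p-1$ I would set
\[
B_{j}=\left\{ z\in A^{\mathbb{Z}}:\forall k\geq 0,\ F^{k}(z)(-r,r)=v_{(j+k)\bmod p}\right\},
\]
the set of configurations whose central window rides the periodic cycle starting at phase $j$. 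Each $B_{j}$ is a countable intersection of clopen sets, hence Borel, and the $B_{j}$ are pairwise disjoint because the $v_{j}$ are distinct.

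First I would check that every $B_{j}$ has positive measure. Since $x$ is $\nu $-equicontinuous, $\nu \left( B_{[-r,r]}(x)\right) >0$, and the eventual periodicity gives $F^{p_{0}}\left( B_{[-r,r]}(x)\right) \subseteq B_{0}$, i.e. $B_{[-r,r]}(x)\subseteq F^{-p_{0}}(B_{0})$. As $F$ is surjective the uniform measure is $F$-invariant, so $\nu (B_{0})=\nu \left( F^{-p_{0}}(B_{0})\right) \geq \nu \left( B_{[-r,r]}(x)\right) >0$. Moreover the definition yields $F(B_{j})\subseteq B_{j+1}$, hence $B_{j}\subseteq F^{-1}(B_{j+1})$, and $F$-invariance gives $\nu (B_{j})\leq \nu (B_{j+1})$ for every $j$ modulo $p$; traversing the cycle once forces all the $\nu (B_{j})$ to be equal, hence all positive.

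Next I would define the phase function $\phi :A^{\mathbb{Z}}\to \{0,\dots ,p\}$ by $\phi (z)=j$ on $B_{j}$ and $\phi (z)=p$ on the complement of $U=\bigcup _{j}B_{j}$, and the map $\pi :A^{\mathbb{Z}}\to \{0,\dots ,p\}^{\mathbb{Z}}$ by $\pi (z)_{n}=\phi \left( \sigma ^{n}z\right) $. Using $F^{k}(Fz)(-r,r)=F^{k+1}(z)(-r,r)$ one reads off that $z\in B_{j}$ implies $Fz\in B_{j+1}$, so on $U$ the phase advances by one modulo $p$, exactly the action of the periodic automaton $C$ of the previous proposition. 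Since $\pi $ is built coordinatewise from $\phi $ and $F$ commutes with $\sigma $, establishing $\pi \circ F=C\circ \pi $ reduces to establishing $\phi \circ F=C\circ \phi $.

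The step I expect to be the crux is controlling the transient (dead) symbol: the inclusion $B_{j}\subseteq F^{-1}(B_{j+1})$ need not be an equality, so a priori a configuration with $\phi (z)=p$ could satisfy $\phi (Fz)\neq p$, breaking equivariance. Here I would again invoke $F$-invariance of $\nu $: the inclusion $F^{-1}(U)\supseteq U$ together with $\nu \left( F^{-1}(U)\right) =\nu (U)$ forces $\nu \left( F^{-1}(U)\setminus U\right) =0$, so $U$ is invariant modulo a null set and $\phi \circ F=C\circ \phi $ holds $\nu $-almost everywhere. Consequently $\pi \circ F=C\circ \pi $ holds $\nu $-a.e., $\pi $ is measurable and non-trivial since each $B_{j}$ has positive measure, and $C$ is an equicontinuous cellular automaton because $C^{p}=\mathrm{id}$. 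Taking the factor to be $C$ on the support of $\pi _{\ast }\nu $ then exhibits the desired measurable equicontinuous cellular automaton factor of $\left( F,\nu \right) $.
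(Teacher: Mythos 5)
Your proposal is correct in substance and follows the same overall strategy as the paper: invoke Lemma \ref{Lemma} to get eventual periodicity of the central column, then build the ``clock'' cellular automaton $C$ on the alphabet $\{0,\dots,p\}$ with dead symbol $p$, together with a phase-reading factor map. But your key technical device is genuinely different, and arguably tighter. The paper labels coordinate $i$ by inspecting a finite word of $x$ around $i$ (whether its cylinder lies in the one-step preimage set $\widetilde{W}_{k}=F^{-1}\{p_{k}\}$) and then asserts exact equivariance $\pi \circ F=C\circ \pi$ by direct computation; as written this glosses over the fact that a configuration may display a cycle word in its central window without actually riding the cycle thereafter, and over what happens to the dead symbol under $F$. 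You instead define the phase by the infinite-time sets $B_{j}$, which makes the ``live'' case of equivariance exact by construction, and you use $F$-invariance of the uniform measure (valid because $F$ is surjective) twice: once to push $B_{[-r,r]}(x)$ around the cycle and conclude that every $B_{j}$ has equal positive measure, and once to get $\nu \left( F^{-1}(U)\setminus U\right) =0$, so the dead symbol is equivariant almost everywhere. This buys a rigorous a.e.\ intertwining, which is exactly what a measurable factor requires; what it gives up is the locality (indeed continuity) that the paper's word-based map would have if its computation were repaired.

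Two small repairs to your write-up. First, the words $v_{0},\dots ,v_{p-1}$ within one period need \emph{not} be distinct (a sequence of minimal period $4$ can read $abac\,abac\dots$), so your stated reason for disjointness of the $B_{j}$ is wrong; disjointness does hold, but because $p$ is the \emph{minimal} period: if $z\in B_{j}\cap B_{j'}$ then the periodic sequence $\left( v_{(j+k)\bmod p}\right) _{k\geq 0}$ coincides with its shift by $j'-j$, forcing $j\equiv j'\ (\mathrm{mod}\ p)$. Second, passing from $\phi \circ F=C\circ \phi$ a.e.\ to $\pi \circ F=C\circ \pi$ a.e.\ requires summing the exceptional null sets over all coordinates, which uses the $\sigma $-invariance of $\nu $; and the final restriction of $C$ to the support of $\pi _{\ast }\nu $ is unnecessary and potentially harmful (a subshift of the full shift need not carry a CA structure): the factor is simply $\left( \{0,\dots ,p\}^{\mathbb{Z}},C,\pi _{\ast }\nu \right) $ itself.
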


\begin{proof}
Let $r$ be the radius of the cellular automaton and let $x$ be a $\nu $%
-equicontinuous point. Then we have $\nu \left( B_{\left[ -r,r\right]
}\left( x\right) \right) >0.$ Using Lemma \ref{Lemma} for every $y\in \sigma
^{-p}B_{\left[ -r,r\right] }\left( x\right) \cap \left( B_{\left[ -r,r\right]
}\left( x\right) \right) $ the sequence $F^{k}\left( y\right) \left(
-r,r\right) $ is eventually periodic.

As the number of words of length $2r+1$ is finite, so there exists a common
period $p$ and preperiod $p_{0}$ for all points of $\sigma ^{-p}B_{\left[
-r,r\right] }\left( x\right) \cap \left( B_{\left[ -r,r\right] }\left(
x\right) \right) .$

For some $y\in \sigma ^{-p}\left( B_{\left[ -r,r\right] }\left( x\right)
\right) \cap \left( B_{\left[ -r,r\right] }\left( x\right) \right) $
consider the finite set 
\begin{equation*}
\mathbb{P}=\left\{ F^{k}\left( y\right) \left( -r,r\right) :p_{0}\leq k\leq
p_{0}+p-1\right\} =\left\{ p_{k}:p_{0}\leq k\leq p_{0}+p-1\right\}
\end{equation*}%
Notice that the definition of the set $\mathbb{P}$ do not depend on the
choice of $y.$

Let us define the measurable sets 
\begin{equation*}
\widetilde{W}_{k}=F^{-1}\left\{ p_{k}\right\} :p_{0}\leq k\leq p_{0}+p-1.
\end{equation*}%
Consider the alphabet $A=\left( \mathbb{Z}/\left( p+1\right) \mathbb{Z}%
\right) $ and let the function $\pi $ be defined by 
\begin{equation*}
\forall x\in A^{\mathbb{Z}}:\left\{ 
\begin{array}{l}
\pi \left( x\right) _{i}=i-p_{0}:\left[ x\left( i,i+2r+1\right) \right]
\subset \widetilde{W}_{k}:k_{0}\leq k\leq k_{0}+p. \\ 
p:otherwise.%
\end{array}%
\right.
\end{equation*}%
The function $\pi $ is measurable and is associated to the equicontinuous
cellular automaton defined by :%
\begin{equation*}
\forall x\in \left( \mathbb{Z}/\left( p+1\right) \mathbb{Z}\right) ^{\mathbb{%
Z}}:C\left( x\right) _{i}=\left\{ 
\begin{array}{l}
\left( x_{i}+1\right) \func{mod}p\text{ }if\text{ }x\neq p \\ 
p\text{ }if\text{ }x=p%
\end{array}%
\right.
\end{equation*}%
Computing $\pi \circ F$ and $C\circ \pi $ we obtain :%
\begin{equation*}
\left( \pi \circ F\right) \left( x\right) =\left\{ 
\begin{array}{l}
\left( x_{i}+1\right) \,mod\,p:\left[ x\left( i,i+2r+1\right) \right]
\subset \widetilde{W}_{k}:k_{0}\leq k\leq k_{0}+p. \\ 
p:otherwise.%
\end{array}%
\right.
\end{equation*}%
\begin{equation*}
\left( C\circ \pi \right) \left( x\right) =\left\{ 
\begin{array}{l}
\left( x_{i}+1\right) \,mod\,p:\left[ x\left( i,i+2r+1\right) \right]
\subset \widetilde{W}_{k}:k_{0}\leq k\leq k_{0}+p. \\ 
p:otherwise.%
\end{array}%
\right.
\end{equation*}%
This shows that $C$ is a measurable equicontinuous factor of $F.$
\end{proof}

In this section we show that an ergodic CA cannot admit irrational
eigenvalues.

\begin{proposition}
Let $\left( A^{\mathbb{Z}},\mathbb{B},F,\nu \right) $ be a surjective
cellular automaton. If $F$ is ergodic then it cannot have any irrational
eigenvalue.
\end{proposition}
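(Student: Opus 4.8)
The plan is to play the automaton $F$ off against the shift $\sigma$, using that they commute and that the uniform measure $\nu$ makes $\sigma$ a Bernoulli system, which is weakly mixing. Suppose $\lambda=\exp(2i\pi\alpha)$ is an eigenvalue of $F$ with a nonzero eigenfunction $g\in L_{\nu}^{2}$, so $g\circ F=\lambda\,g$ a.e.; I will show $\lambda=1$, which in particular excludes every irrational eigenvalue. Since $F$ is ergodic, the relation $|g|\circ F=|g|$ forces $|g|$ to be $\nu$-a.e. constant, so after normalising we may assume $|g|=1$ a.e.

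The first step is to observe that $g\circ\sigma$ is again a $\lambda$-eigenfunction of $F$. Using the commutation $F\circ\sigma=\sigma\circ F$,
\begin{equation*}
(g\circ\sigma)\circ F=g\circ(\sigma\circ F)=g\circ(F\circ\sigma)=(g\circ F)\circ\sigma=\lambda\,(g\circ\sigma),
\end{equation*}
and because $\sigma$ preserves $\nu$ the function $g\circ\sigma$ lies in $L_{\nu}^{2}$ with the same norm as $g$, and $|g\circ\sigma|=|g|\circ\sigma=1$ a.e.

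The second step invokes that ergodicity makes the point spectrum simple: if $g,h$ are two $\lambda$-eigenfunctions of constant modulus, then $(g/h)\circ F=g/h$, so $g/h$ is $F$-invariant and hence constant by ergodicity. Applying this with $h=g\circ\sigma$ gives $g\circ\sigma=c\,g$ for some constant $c$, and $|c|=1$ since $\sigma$ preserves $\nu$. But this says precisely that $g$ is an eigenfunction of the shift with eigenvalue $c$. As $\nu$ is the uniform Bernoulli measure, $\left(A^{\mathbb{Z}},\sigma,\nu\right)$ is weakly mixing, so its only eigenvalue is $1$ and the corresponding eigenfunctions are constant; therefore $c=1$ and $g$ is $\nu$-a.e. constant. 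Substituting a constant $g$ back into $g\circ F=\lambda\,g$ yields $\lambda=1$.

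The step I expect to be the main obstacle is the simplicity argument: one must be sure that $g\circ\sigma$ falls into the \emph{one-dimensional} $\lambda$-eigenspace, and this is exactly where $F$-ergodicity is indispensable, since it is what provides constant modulus and hence simplicity of the spectrum. Everything else is a formal manipulation of the commutation relation together with the standard fact that the uniform Bernoulli shift is weakly mixing. I note for completeness that the conclusion also drops out at once from part 3 of Pivato's Proposition: as $\nu$ is $F$-ergodic and $\left(A^{\mathbb{Z}},\sigma,\nu\right)$ is weakly mixing, $\left(A^{\mathbb{Z}},F,\nu\right)$ is weakly mixing, so its spectrum is trivial; the argument above merely unwinds that implication by hand for the uniform measure.
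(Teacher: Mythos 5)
Your proof is correct, and it diverges from the paper's in the decisive second half. The opening move is shared: both you and the paper use the commutation $F\circ\sigma=\sigma\circ F$ plus ergodicity of $F$ to conclude that the quotient of $g$ by a shifted copy of itself (your $g/(g\circ\sigma)$, the paper's $h=(g\circ\sigma^{-n})/g$) is $F$-invariant and hence $\nu$-a.e.\ constant, after first using ergodicity to make $|g|$ constant so the quotient is well defined. From there the paths split. The paper argues by contradiction that $h$ cannot be constant: it applies Lusin's theorem to obtain a closed set $E_{\eta}$ of almost full measure on which $g$ is continuous, chooses words $w_{1},w_{2}$ on which $g$ is localized near $re^{2i\pi(p+q)\alpha}$ and $re^{2i\pi q\alpha}$, and compares the cylinders $\left[ w_{1}uw_{2}\right]$ and $\left[ w_{2}uw_{1}\right]$ to see that $h$ takes values near the two points $e^{2i\pi p\alpha}$ and $e^{-2i\pi p\alpha}$, which irrationality of $\alpha$ allows one to separate. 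You instead accept the constancy, read $g\circ\sigma=c\,g$ as saying that $g$ is an eigenfunction of the shift, and invoke weak mixing of the uniform Bernoulli shift to force $g$ constant and hence $\lambda=1$. Your route is shorter, avoids the Lusin/continuity step entirely, and proves the stronger statement that the whole point spectrum of an ergodic surjective CA is trivial (ergodic implies weakly mixing), which is precisely part 3 of Pivato's proposition quoted in the paper --- as you observe, the statement follows from that citation in one line. What the paper's argument buys in exchange is self-containedness: it never invokes weak mixing of the Bernoulli shift, deriving the contradiction by hand from continuity and cylinder combinatorics, at the price of ruling out only irrational eigenvalues. Your glossing of the a.e.\ bookkeeping (the paper handles it by intersecting $\bigcap_{n}\sigma^{-n}\left( G_{0}\right)$) is standard and harmless.
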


\begin{proof}
Let us suppose there is a subset $G_{0}$ of $A^{\mathbb{Z}}$ such that $\nu
\left( G_{0}\right) =1$ and $g$ is the eigenfunction associated to $e^{2i\pi
\alpha }$ with $\alpha \in \mathbb{R}\backslash \mathbb{Q}$ on $G_{0}.$
Since $\nu $ is $\sigma $-invariant the set $G=\tbigcap\limits_{n=-\infty
}^{\infty }\sigma ^{-n}\left( G_{0}\right) $ satisfies $\nu \left( G\right)
=1.$

We have then :%
\begin{equation*}
\forall x\in G:g\left( F\left( x\right) \right) =e^{2i\pi \alpha }g\left(
x\right) \Rightarrow g\left( F\left( \sigma ^{-n}\left( x\right) \right)
\right) =e^{2i\pi \alpha }g\left( \sigma ^{-n}\left( x\right) \right) .
\end{equation*}%
By commutation with the shift we obtain : 
\begin{equation*}
\dfrac{g\left( F\left( \sigma ^{-n}\left( x\right) \right) \right) }{g\left(
F\left( x\right) \right) }=\dfrac{g\left( \sigma ^{-n}\left( x\right)
\right) }{g\left( x\right) }\Rightarrow \dfrac{g\left( \sigma ^{-n}\left(
F\left( x\right) \right) \right) }{g\left( F\left( x\right) \right) }=\dfrac{%
g\left( \sigma ^{-n}\left( x\right) \right) }{g\left( x\right) }.
\end{equation*}

As $g$ is an eigenfunction the property $g\left( x\right) =0$ is invariant
by $F$ and the set $g^{-1}\left( \left\{ 0\right\} \right) $ is invariant.%
\newline
By ergodicity of $\nu $ the set $g^{-1}\left( \left\{ 0\right\} \right) $ is
either of measure 0 or of measure 1. As $g$ is non null on a set of positive
measure it is then almost everywhere non null. Thus the quotients are well
defined almost everywhere.

Let $h=\dfrac{g\circ \sigma ^{-n}}{g}$ we have then $h\left( F\left(
x\right) \right) =h\left( x\right) $ so $h$ is an invariant function for $F$
and by ergodicity it is constant.

In the following we will show that the function $h$ cannot be constant.

By the Lusin theorem for every $\eta >0$ there exists a closed set with $%
E_{\eta }\subset A^{\mathbb{Z}}$ and $\nu \left( A^{\mathbb{Z}}\backslash
E_{\eta }\right) \leq \eta $ and such that the restriction of $F$ to the set 
$E_{\eta }$ is continuous.

As $\nu $ is ergodic the eigenfunction $g$ is of constant module denoted by $%
r=\left\vert g\left( x\right) \right\vert .$

As $\alpha $ is irrational for every $\delta $ small enough there exist an
integer $p$ such that: $B_{\delta }\left( e^{2i\pi p\alpha }\right) \cap
B_{\delta }\left( e^{2i\pi \left( -p\right) \alpha }\right) =\emptyset .$
Where $B_{\delta }$ denote the ball of radius $\delta $ in $\mathbb{C}$.

Consider an arbitrary integer $q$ by continuity on $E_{\eta }$ there exists
two words $w_{1},w_{2}$ such that $\left\vert w_{1}\right\vert =\left\vert
w_{2}\right\vert $ satisfying :%
\begin{equation*}
\begin{cases}
\forall x\in \left[ w_{1}\right] \cap E_{\eta }:g\left( x\right) \in
B_{\delta }\left( re^{2i\pi \left( p+q\right) \alpha }\right) . \\ 
\forall x\in \left[ w_{2}\right] \cap E_{\eta }:g\left( x\right) \in
B_{\delta }\left( re^{2i\pi \left( q\right) \alpha }\right) .%
\end{cases}%
\end{equation*}%
Let be the words $w_{1}uw_{2}$ and $w_{1}uw_{1}$ with $u$ an arbitrary word
such that $\left\vert u\right\vert =n-\left\vert w_{1}\right\vert $.

We have then :%
\begin{equation*}
\begin{cases}
\forall x\in \left[ w_{1}uw_{2}\right] \cap E_{\eta }:g\left( x\right) \in
B_{\delta }\left( re^{2i\pi \left( p+q\right) \alpha }\right) \,. \\ 
\forall x\in \left[ w_{2}uw_{1}\right] \cap E_{\eta }:g\left( x\right) \in
B_{\delta }\left( re^{2i\pi q\alpha }\right) .%
\end{cases}%
\end{equation*}%
From an other side we have :%
\begin{equation*}
\begin{cases}
\forall x\in \left[ w_{1}uw_{2}\right] \cap E_{\eta }:g\left( \sigma
^{-n}\left( x\right) \right) \in B_{\delta }\left( re^{2i\pi q\alpha
}\right) . \\ 
\forall x\in \left[ w_{2}uw_{1}\right] \cap E_{\eta }:\,g\left( \sigma
^{-n}\left( x\right) \right) \in B_{\delta }\left( re^{2i\pi \left(
p+q\right) \alpha }\right) .%
\end{cases}%
\end{equation*}%
Consequently :%
\begin{equation*}
\begin{cases}
\forall x\in \left[ w_{1}uw_{2}\right] \cap E_{\eta }\Rightarrow h\left(
x\right) \in B_{\delta }\left( e^{2i\pi p\alpha }\right) . \\ 
\forall x\in \left[ w_{2}uw_{1}\right] \cap E_{\eta }\Rightarrow h\left(
x\right) \in B_{\delta }\left( e^{2i\pi \left( -p\right) \alpha }\right) .%
\end{cases}%
\end{equation*}%
Thus $h$ cannot be constant and $F$ cannot be ergodic.
\end{proof}

\section{Conclusion}

Two results are shown about equicontinuous factors of cellular automata. An
interesting direction is to try to characterize the maximal equicontinuous
factor of a cellular automaton. A first natural category to investigate
would be that of cellular automata with equicontinuity points. Is the
maximal equicontinuous factor still a cellular automaton or not is another
natural question to investigate.

We know that an ergodic cellular automaton is weakly mixing. The existence
of an example of a cellular automaton which is weakly mixing but not mixing
is still an open problem.

\end{document}